\newcommand\dd{\mathrm{d}}
\newcommand\EE{\mathbb{E}}
\newcommand\RR{\mathbb{R}}
\newcommand{\newsstheorem}[2]{
  \newaliascnt{#1}{dummy}
  \newtheorem{#1}[#1]{#2}
  \aliascntresetthe{#1}
  \expandafter\def\csname #1autorefname\endcsname{#2}
}
\theoremstyle{plain}
\newenvironment{proofof}[1]{\par
  \pushQED{\qed}%
  \normalfont \topsep6\p@\@plus6\p@\relax
  \trivlist
  \item[\hskip\labelsep
        \bfseries
    Proof of #1\@addpunct{.}]\ignorespaces
}{%
  \popQED\endtrivlist\@endpefalse
}
\theoremstyle{remark}
\theoremstyle{definition}
\title{The equivalence of two tax processes}
\author{Dalal Al Ghanim\footnote{University of Manchester, Department of Mathematics, UK. \texttt{dalal.alghanim@postgrad.manchester.ac.uk}.} \and  Ronnie Loeffen\footnote{University of Manchester, Department of Mathematics, UK. \texttt{ronnie.loeffen@manchester.ac.uk}.} \and Alexander R.\ Watson\footnote{University College London, Department of Statistical Science, UK. \texttt{alexander.watson.@ucl.ac.uk}.}}
\begin{document}

\maketitle

\begin{abstract}
  We introduce two models of taxation,
  the latent and natural tax processes, which have both been used to represent loss-carry-forward taxation on the capital of an insurance company.
  In the natural tax process, the tax rate is a function of the current level of capital, whereas in the latent tax process, the tax rate is a function of the capital that would have resulted if no tax had been paid.
  Whereas up to now these two types of tax processes have been treated
  separately, we show that, in fact, they are essentially equivalent.
  This allows a unified treatment, translating results from one
  model to the other.
  Significantly, we solve the question of existence and uniqueness
  for the natural tax process, which is defined via
  an integral equation.
  Our results clarify the existing literature on processes with tax.
\end{abstract}

\medskip
\textbf{Key words and phrases.} Risk process, tax process, tax rate, spectrally negative {L}\'evy process, ruin probability, tax identity, optimal control.

\medskip
\textbf{MSC2010 classification.}
60G51, % Processes with independent increments, Levy processes
91B30, % Risk theory, insurance
93E20, % Optimal stochastic control
91G80. % Financial applications of stochastic control

\section{Introduction and main results}

Risk processes are a model for the evolution in time of the (economic) capital or surplus of an insurance company. 
Suppose that we have some model $X=(X_t)_{t\geq 0}$ for the risk process,
in which $X_t$ represents the capital of the company at time $t$;
for instance, a common choice is for $X$ to be a L\'evy process with negative
jumps.
Any such model can be modified in order to incorporate desirable features. 
For instance, reflecting the path at a given barrier models the situation where the insurance company pays out any capital in excess of the barrier as dividends to shareholders. Similarly, `refracting' the path at a given level and with a given angle corresponds to the case where dividends are paid out at a certain fixed rate whenever the capital is above the level or, equivalently, corresponds to a two-step premium rate. %, see e.g. Section VIII.1a in  \cite{albrecher_asmussen}. 
These modifications are described in more detail in Chapter 10 of \citet{FluctuationBook}, in the L\'evy process case.

Between the reflected and refracted  processes are a class of processes where partial reflection occurs whenever the process reaches a new maximum. The motivation in risk theory for these processes is that the times of partial reflection can be understood to correspond to tax payments associated with a so-called loss-carry-forward  regime in which taxes are paid only when the insurance company is in a profitable situation. In this paper we study \emph{tax processes} of this kind. 

%\medskip

Before we define rigorously the type of tax processes that we are interested in, we make some assumptions on $X$ that are in place throughout the paper.
We assume that $X$ is a stochastic process with c\`adl\`ag paths (i.e.,
right-continuous paths with left-limits) and without upward jumps
(that is, $X_t-\lim_{s\uparrow t}X_s\leq 0$ for all $t\geq 0$).
We also assume $X_0=x$ for some fixed $x\in\mathbb R$.

For example, these conditions are
satisfied if $X$ is a L\'evy process without upward jumps.
In fact, the main results presented in this work hold pathwise,
in the sense that they apply to each individual path of the stochastic process.
A random model is strictly only required for the study of specific examples;
however, given the applications we have in mind, it seems appropriate to
phrase everything in terms of stochastic processes.

\medskip
\noindent
One way to incorporate  a loss-carry-forward taxation regime for the risk process $X$ is to introduce
the tax process $U^\gamma \coloneqq (U_t^\gamma)_{t\geq 0}$ with
\begin{equation}\label{U}
  U_t^\gamma = X_t - \int_{0^+}^t \gamma(\overline{X}_s) \, \dd \overline{X}_s, \qquad t\geq 0,
\end{equation}
where $\gamma \colon [x,\infty) \to [0,1)$ is a measurable function and
$\overline{X}_t = \sup_{s\le t} X_s$ is the running maximum of $X$.
Note that, here and later,
$\int_{0^+}^t = \int_{(0,t]}$ denotes the integral over $(0,t]$. Since every path $t\mapsto\overline X_t$ is increasing (in the weak sense),  and is further continuous due to the assumptions on $X$, the integral in \eqref{U} is a well-defined Lebesgue-Stieltjes integral.
We call $U^\gamma$ a \emph{latent tax process} or the tax process with \emph{latent tax rate} $\gamma$. For this latent tax process we have that, roughly speaking, in the time interval $[t,t+h]$ with $h>0$ small, a fraction $\gamma(\overline X_t)$ of the increment $\overline X_{t+h}-\overline X_t$ is paid as tax. In particular, tax contributions are made whenever $X$ reaches a new maximum (which is whenever $U^\gamma$ reaches a new maximum; see Lemma \ref{equality in time for U and X} below), which is why the taxation structure in \eqref{U} can be seen to be of the loss-carry-forward type.
Since $\gamma < 1$, this can be seen as partial reflection;
setting $\gamma = \mathbf{1}_{[b,\infty)}$ would correspond to fully reflecting the path at the barrier $b$.

A great deal of literature has emerged in the study of this tax process.
It was introduced by \citet{LundbergRiskProcessWithTax}
in the case where $X$ is a Cram\'er--Lundberg process and $\gamma$ is a constant,
and in that work the authors studied the ruin probabilities, proving a strikingly simple
relation between ruin probabilities with and without tax, the so-called tax identity.
This work was extended by Albrecher, Renaud and Zhou \cite{LevyInsuranceRiskProcessWithTax}, using excursion theory, to the case
where $X$ is a general spectrally negative L\'evy process, with $\gamma$
still constant. In \cite{GeneralTaxStructure}, Kyprianou and Zhou
took $\gamma$ to be a function, and studied problems related to the two-sided
exit problem and the net present value of the taxes paid before ruin. In the same setting Renaud  \cite{TheDistributionOfTaxPayments} provided results on the distribution of the (present) value of the taxes paid before ruin. \citet{OptimalLossCarryForwardWang} studied a problem of optimal control of latent tax processes in which one seeks to maximise the net present value of the taxes paid before ruin.
%%%
A variation of the latent tax process in which the tax rate exceeds the value 1 can be found in \citet{KaprianouAndOtt}.

\medskip\noindent
An unusual property of the process $U^\gamma$ is that the taxation at time $t$ depends,
not on the running maximum $\overline{U}^\gamma_t = \sup_{s\le t} U^\gamma_s$
of the process $U^\gamma$ itself, but on the running maximum of $X$, i.e., $\overline{X}_t$. In other words, the amount of tax the company pays out at time $t$ is not determined by the amount of capital the company has at that time but it depends on a latent capital level, namely $\overline{X}_t$, which is the amount of capital that the company would have at time $t$ if no taxes were paid out at all. Besides being somewhat unnatural, this also means that in the common case where $X$ is modelled by a Markov process, the process $(U^\gamma,\overline{U}^\gamma)$ is not
Markov (see the first paragraph of Section \ref{s:remarks}).
In order to maintain the Markov property,
one would need to consider to the three-dimensional process $(U^\gamma,\overline{U}^\gamma,\overline{X})$.

For these reasons, it may be more
suitable to use another tax process $V^\delta=(V^\delta_t)_{t\ge 0}$, satisfying the equation
\begin{equation}\label{V}
  V^\delta_t = X_t - \int_{0^+}^t \delta(\overline{V}^\delta_s)\,\dd \overline{X}_s,
\end{equation}
where $\overline{V}^\delta_t = \sup_{s\le t}V^\delta_s$ and
$\delta\colon [x,\infty) \to [0,1)$ is a measurable function. % tax rate, which represents some taxed proportion of income depending on the capital. 
 We call $V^\delta$ a \emph{natural tax process} or a tax process with \emph{natural tax rate} $\delta$.
%We call $\delta$ a \emph{natural tax rate} and $V^\delta$ a \emph{natural tax process}.
Since \eqref{V} is an integral equation, it is not immediately clear whether
such a process $V^\delta$ exists and if so if it is uniquely defined. We will shortly give a simple condition for existence and uniqueness. Assuming that existence and uniqueness holds and that $X$ is a Markov process, the natural tax process $V^\delta$ has the advantage that the two-dimensional process $(V^\delta,\overline{V}^\delta)$ is Markov. For the reason that we retain $\overline{X}$ as the integrator in \eqref{V}, instead of using $\overline{V}^\delta$, see the second paragraph of Section \ref{s:remarks}.

Albrecher, Borst, Boxma and Resing \cite{TheTaxIdentity} looked at tax processes with a natural tax rate in the case where $X$ is a Cram\'er-Lundberg risk process and studied the ruin probability, though they do not provide a definition of the tax process in terms of an integral equation and in particular do not discuss existence and uniqueness. 
In the setting where $X$ is a Cram\'er-Lundberg risk process, \citet{wei} and \citet{CheungCramer} considered a more general class of natural tax processes than ours in which the associated premium rate is allowed to be surplus-dependent. % and they found analytic representations for several quantities of interest.
Although \cite{wei} and \cite{CheungCramer} do contain the definition \eqref{V} for the natural tax process in the case where $X$ is a Cram\'er-Lundberg risk process (see \cite[Section 1]{wei} with $\delta=0$ and \cite[Equation (1.2)]{CheungCramer} with the function $c(\cdot)$ being constant), neither paper addresses
the question of existence and uniqueness.

\medskip\noindent
The purpose of this work is to clarify the relationship between these two tax processes. Whereas latent and natural tax processes appear quite different when considering their definitions, it emerges that these two classes of tax processes are essentially equivalent, an observation which has seemingly gone unnoticed in the literature. This equivalence allows us to deal in a rather straightforward way with the existence and uniqueness of the natural tax process,  which is something that has not been dealt with before.

%and to derive identities for the two-sided exit problem for $V^\delta$. As a corollary, we will explain a curious result of \citet{OptimalLossCarryForwardWang} on optimal control of latent tax processes, and demonstrate that our approach is consistent with that of \citet{TheTaxIdentity}.

Before presenting our main theorem, we emphasise that our results hold true for a large class of stochastic processes for $X$ that includes, amongst others, spectrally negative L\'evy processes, spectrally negative Markov additive processes (see \cite{MAPtax2014}), diffusion processes (see \cite{litangzhou}) and fractional Brownian motion. However, practically, \eqref{U} and \eqref{V} may not in all cases be the right way to define a taxed process. For instance, when one considers a Cram\'er-Lundberg risk process where the company earns interest on its capital as well as pays tax according to a loss-carry-forward scheme, then one should not work with a process of the form \eqref{U} or \eqref{V}, but define the tax process differently, as in \cite{wei}. Our definitions \eqref{U} and \eqref{V} are practically suitable for modelling tax processes when the underlying risk process without tax  $X$ has a spatial homogeneity property, which is the case for, for instance, spectrally negative L\'evy processes, spectrally negative Markov additive processes or Sparre Andersen risk processes.

%We now summarise our results precisely. As stated, let $X$ be a spectrally negative L\'evy process, with associated probabilities $(\PP_x)_{x\in\RR}$ such that $X_0 = x$ $\PP_x$-almost surely.

In order to present our main result, we will need to consider
the following ordinary differential equation, for a given
measurable function $\delta \colon [x,\infty) \to [0,1)$:
\begin{equation}
  \label{number of the ODE}
  \begin{split}
    \dfrac{\dd y^\delta_{x}(t)}{\dd t} &= 1- \delta \left( y^\delta_{x}(t) \right), \qquad t\geq 0, \\
    y^\delta_{x}(0)&=x.
  \end{split}
\end{equation}
We say that $y^\delta_x \colon [0,\infty) \to\RR$
is a \emph{solution} of this ODE if it is an absolutely
continuous function and satisfies \eqref{number of the ODE} for
almost every $t$.

\begin{thm}\label{equivalence relation between U and V}
 Recall that $X_0 = x$. %; that is, we are working under measure $\PP_x$.
\begin{enumerate}[(i)]
		\item \label{i:intro:I}
		Let $U^\gamma$ be the tax process with latent tax rate $\gamma$, where $\gamma \colon [x,\infty)\rightarrow [0,1)$ is a measurable function.
		Define $\bar{\gamma}_{x} \colon [x,\infty)\to \RR$ by
		\begin{align}\label{GammaBar function}
		\bar{\gamma}_{x}(s) = x + \int_{x}^{s}(1-\gamma(y)) \dd y, \qquad s \geq x,
		\end{align}
		and consider its inverse
		$\bar{\gamma}_x^{-1} \colon [x,\infty] \to [x,\infty]$,
		with the convention that $\bar{\gamma}_x^{-1}(s) = \infty$
		when $s\ge \bar{\gamma}_x(\infty)$.
		Define $\delta_{x}^{\gamma}\colon [x,\bar{\gamma}_{x}(\infty)) \rightarrow[0,1)$ by
		$\delta_{x}^{\gamma}(s) = \gamma (\bar{\gamma}_{x}^{-1}(s))$.
		Then, 
		\begin{equation}\label{Ubar as a function of Xbar}
		\overline{U}^\gamma_t = \bar\gamma_x(\overline X_t),
		\qquad t \geq 0,
		\end{equation}
		 and $U^\gamma$ is a natural tax process with natural tax rate $\delta^\gamma_x$.
	\item\label{i:intro:II}
	Let $\delta \colon [x,\infty)\rightarrow [0,1)$ be a measurable function and assume that there exists a unique solution $y^\delta_{x}(t)$	of \eqref{number of the ODE}. Define $\gamma_{x}^{\delta}\colon [x, \infty)\to [0,1)$   by $\gamma_{x}^{\delta}(s) =  \delta \left( y^\delta_{x}(s-x) \right)$.  
	
	Then, the integral equation \eqref{V} defining the natural tax process
	has a unique solution $V^\delta=(V^\delta_t)_{t\geq 0}$. Moreover, 
	\begin{equation}\label{Vbar=Y(Xbar-x)}
	\overline{V}^{\delta}_{t}= y^\delta_{x}(\overline{X}_{t}-x), \qquad t \geq 0,
	\end{equation}
	and so the solution $V^\delta$ to \eqref{V} is a latent tax process with latent tax rate given by $\gamma_{x}^{\delta}$.
\end{enumerate}
\end{thm}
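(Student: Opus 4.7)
My plan is to prove part (i) by a direct computation of the running maximum of $U^\gamma$, and then to deduce part (ii) by applying part (i) to the rate $\gamma_{x}^{\delta}$, reducing existence and uniqueness of the natural tax process to the assumed uniqueness of the ODE \eqref{number of the ODE}.

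For part (i), Lemma \ref{equality in time for U and X} tells us that $U^\gamma$ attains a new running maximum precisely when $X$ does. At any time $t$ with $X_t = \overline{X}_t$, a Lebesgue-Stieltjes change of variables in \eqref{U} (using that $\overline{X}$ is continuous and non-decreasing) gives
\[
  U_t^\gamma = \overline{X}_t - \int_x^{\overline{X}_t}\gamma(u)\,\dd u = x + \int_x^{\overline{X}_t}(1-\gamma(u))\,\dd u = \bar{\gamma}_x(\overline{X}_t),
\]
which equals $\overline{U}^\gamma_t$ by the previous observation. Since both sides of \eqref{Ubar as a function of Xbar} are constant between such times, the equation extends to all $t\ge 0$. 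Substituting $\overline{U}^\gamma_s = \bar\gamma_x(\overline{X}_s)$ into the identity $\delta_{x}^{\gamma}\circ\bar\gamma_x = \gamma$ then shows that \eqref{U} coincides with \eqref{V} for $U^\gamma$ with natural rate $\delta_{x}^{\gamma}$.

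For existence in part (ii), I would take $\gamma = \gamma_{x}^{\delta}$ and consider the associated latent process $U^\gamma$. A short computation using the ODE \eqref{number of the ODE} gives $\bar\gamma_x(s) = x + \int_0^{s-x}(1-\delta(y^\delta_x(v)))\,\dd v = y^\delta_x(s-x)$, so $\delta_{x}^{\gamma}(t) = \delta(y^\delta_x(\bar\gamma_x^{-1}(t)-x)) = \delta(t)$. Part (i), applied to this choice of $\gamma$, then shows that $U^\gamma$ is a solution to \eqref{V}, and that it satisfies \eqref{Vbar=Y(Xbar-x)}.

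Uniqueness is the most delicate step. For any solution $V$ of \eqref{V}, the same arguments underlying Lemma \ref{equality in time for U and X} (which use only $\delta<1$ and that $X$ has no positive jumps) show that $V$ and $X$ reach new maxima simultaneously, that $\overline{V}$ is continuous, and that $\overline{V}$ is constant on every interval on which $\overline{X}$ is. Hence $\overline{V}_t = g(\overline{X}_t)$ for some continuous function $g$ on the range of $\overline{X}$. Evaluating \eqref{V} at times $t$ with $X_t=\overline{X}_t$ and changing variables produces the integral equation $g(r) = r - \int_x^r \delta(g(u))\,\dd u$, whence $g$ is absolutely continuous, $g(x)=x$, and $g'(r)=1-\delta(g(r))$ almost everywhere. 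The ODE uniqueness hypothesis then forces $g(r) = y^\delta_x(r-x)$, which pins down $\overline{V}$ and hence, via \eqref{V}, also $V$. The main obstacle is in making this last step fully rigorous: in particular, one must justify carefully that $\overline{V}$ depends only on $\overline{X}$ before invoking the ODE uniqueness.
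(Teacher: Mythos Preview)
Your approach is essentially the same as the paper's. Part~(i) is identical in substance: the paper applies Lemma~\ref{equality in time for U and X} directly to obtain $\overline{U}^\gamma_t = \overline{X}_t - \int_{0^+}^t \gamma(\overline{X}_r)\,\dd\overline{X}_r$ for all $t$, and then performs the same change of variables you describe. For existence in part~(ii), the paper also applies part~(i) to $\gamma = \gamma_x^\delta$; your verification that $\bar\gamma_x(s) = y_x^\delta(s-x)$ by integrating the ODE is slightly more direct than the paper's route, which instead differentiates to show the equivalent identity $y_x^\delta\bigl((\bar\gamma_x^\delta)^{-1}(z)-x\bigr)=z$.

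The one point worth highlighting is the obstacle you flag in the uniqueness argument. The paper resolves it cleanly by a time-change rather than by first arguing that $\overline{V}$ is a function of $\overline{X}$: setting $L_t = \overline{X}_t - x$ and letting $L_a^{-1}$ be its right-inverse, one computes (using Lemma~\ref{equality in time for U and X} and a change of variables $r = L_b^{-1}$) that
\[
  \overline{V}^\delta_{L_{a\wedge L_\infty}^{-1}} = x + \int_0^{a\wedge L_\infty}\bigl(1-\delta(\overline{V}^\delta_{L_b^{-1}})\bigr)\,\dd b,
\]
so $a\mapsto \overline{V}^\delta_{L_{a\wedge L_\infty}^{-1}}$ satisfies the integral form of the ODE on $[0,L_\infty]$ and hence equals $y_x^\delta(a\wedge L_\infty)$. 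Since $\overline{V}^\delta_{L_{L_t}^{-1}} = \overline{V}^\delta_t$ (continuity of $\overline{X}$), this gives $\overline{V}^\delta_t = y_x^\delta(\overline{X}_t - x)$ directly. This sidesteps having to define $g$ abstractly and is the clean way to make your uniqueness sketch rigorous.
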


This theorem is the main contribution of the article. It states that a sufficient condition for existence and uniqueness of solutions to \eqref{V}
can be given in terms of a simple ODE. From the proofs given in Section \ref{s:prelim and proofs} below, it is not difficult to see that the existence and uniqueness of the ODE \eqref{number of the ODE} is also a necessary condition for existence and uniqueness of a solution to \eqref{V}. Theorem \ref{equivalence relation between U and V} also gives a precise relationship between the two types of tax processes.
In particular, every latent tax process is a natural tax process, though the corresponding latent and natural tax rates may differ. Conversely, every
well-defined natural tax process is also a latent tax process.
The next example illustrates this equivalence
for piecewise constant tax rates.

\begin{exmp}\label{example}
%Let $X$ be a L\'evy process started from $X_0 = x$,
Define the piecewise constant function $f^b$ by
\begin{equation} \label{e:fb}
f^{b}(z)= \begin{cases} \alpha, &   z \leq b,\\
\beta, &z > b,
\end{cases}
\end{equation}
where $b > x=X_0$ and $0 \leq \alpha \leq \beta < 1$. Note that the ODE \eqref{number of the ODE} with $\delta=f^b$ has a unique solution, see e.g. Section \ref{s:remarks}.
It is clear that the tax process  with latent tax rate $f^b$ differs from the tax process with natural tax rate  $f^b$,
unless $\alpha = \beta$ or $\alpha=0$. However, from \autoref{equivalence relation between U and V} we deduce that the tax process with latent tax rate $f^b$ is equal to the tax process with natural tax rate $f^{b'}$  for
\begin{equation*}
 b' =  (1-\alpha)b + \alpha x.
\end{equation*}  
Note that $b'$ depends on the starting point $x$, unless $\alpha=0$.  \autoref{figure A} contains two plots in which an example of $X$ and the corresponding tax process $U^{f^b}$, or equivalently $V^{f^{b'}}$, are drawn. From this figure we see that indeed the first time $X$ reaches the level $b$ is equal to the first time the tax process reaches the level $b'$. %The equality of these two tax processes is further illustrated in \autoref{figure A}.
\end{exmp}

\begin{figure}
	% PDF files generated by exporting the Matlab figure as EPS and using
	% epstopdf to convert. Matlab is still, in the year 2018, incapable of
	% exporting a PDF with an appropriate bounding box.
	\centering
	\begin{subfigure}[a]{\textwidth}
		\centerline{\includegraphics[width=1.2\textwidth]{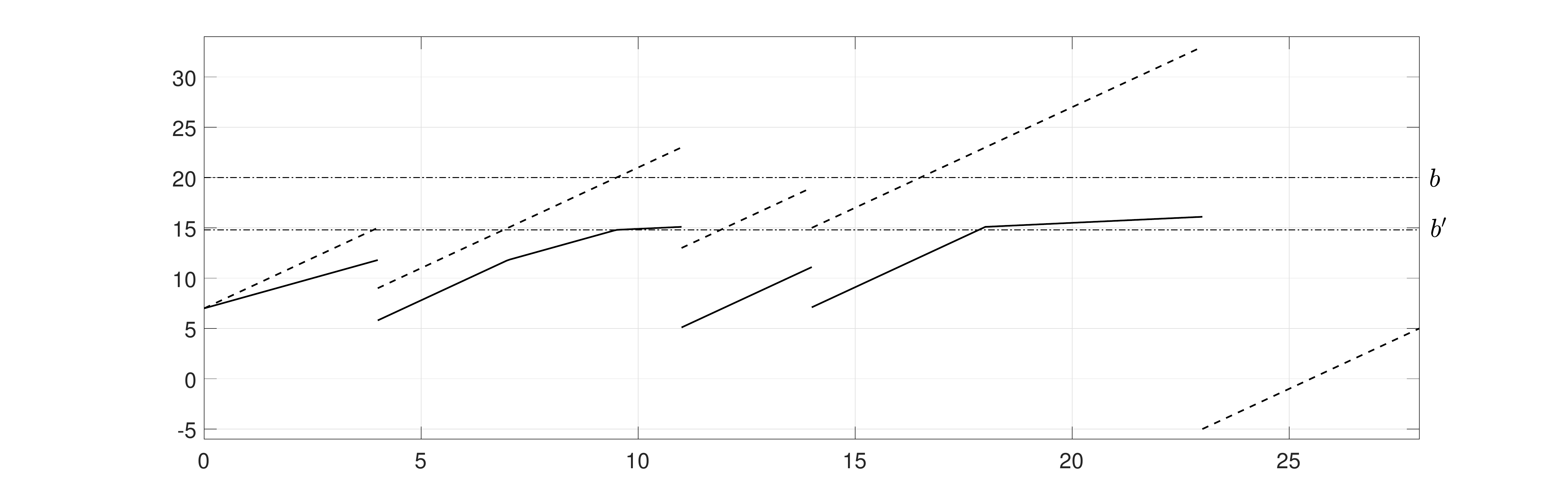}}
		\caption{$x=7$, $b = 20$ and $b' = 14.8$.}
		\label{figure 1} 
	\end{subfigure}
	
	\begin{subfigure}[b]{\textwidth}
		\centerline{\includegraphics[width=1.2\textwidth]{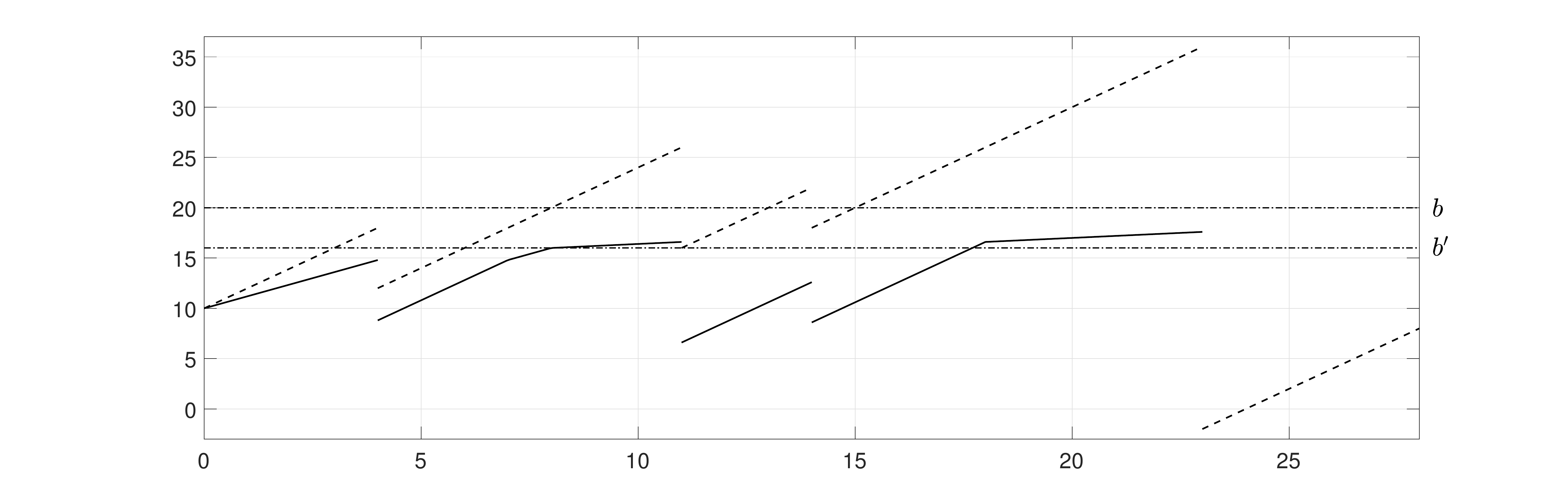}}
		\caption{$x=10$, $b = 20$ and $b' = 16$.}
		\label{figure 2}
	\end{subfigure}
	
	\caption[No table of figures.]{%
		% Paragraphs are too bunched up by default.
		%\setlength{\parskip}{0.3\baselineskip plus2pt minus2pt}%
		Plots of the risk process $X$ (dashed line) and the associated latent tax process $U^{f^b}$ or equivalently natural tax process $V^{f^{b'}}$ (solid line), where $f^b$ is the piecewise constant function defined by \eqref{e:fb} with $\alpha = 0.4$ and $\beta = 0.9$.
		The dashed-dot lines mark  the values of $b$ and $b^{\prime}$.
		\label{figure A}
	}
\end{figure}

The theorem allows us to very easily translate results derived for the latent tax process to results on the natural tax process, or vice versa. As an example, by using the corresponding result derived in \cite{GeneralTaxStructure} for the latent tax processes, we provide below an analytical expression of the so-called two-sided exit problem of the natural tax process in the case where $X$ is a spectrally negative L\'evy process. %, i.e. $X$ is a stochastic process with stationary and independent increments, having c\`adl\`ag sample paths and exhibiting no upward jumps. 
%The expression we provide is consistent with Equation (2.24) in \cite{CheungCramer} for the special case where $X$ is a Cram\'er-Lundberg risk process. 
For an introduction to spectrally negative L\'evy processes and their scale functions we refer to Chapter 8 in \cite{FluctuationBook}.

%For instance, we obtain the following identity by applying results of \cite{GeneralTaxStructure}. \arw{Scrap this? -- Note that, in \cite{GeneralTaxStructure}, the results were given	in the case $\bar{\gamma}_x(\infty) = \infty$, but this assumption	proves inessential.} Here, $W^{(q)}$ denotes the $q$-scale function of the Lévy process $X$, which we define precisely in section \ref{s:prelim and proofs}.

\begin{corollary}\label{t:identities}
	Let $X$ be a spectrally negative L\'evy process on the probability space $(\Omega,\mathcal F,\mathbb P_x)$ such that $\mathbb P_x(X_0=x)=1$. 	Let $\delta \colon [x,\infty)\rightarrow [0,1)$ be a measurable function such that there exists a unique solution $y^\delta_{x}$ to \eqref{number of the ODE}. Let $V^\delta$ be the tax process with natural rate $\delta$ associated with the spectrally negative L\'evy process $X$.
	%Suppose we have the taxed process
	%	\begin{align*}
	%	V^{\delta}_{t} = X_{t} - \int^{t}_{0^+} \delta(\overline{V}^{\delta}_{r}) \dd \overline{X}_{r},\ \ X_{0}=x.
	%	\end{align*}
	Define the first passage times
	\begin{equation*}
	%\label{two sided exit problem for delta V}
	\tau_a^- = \inf\{t\ge 0 : V^{\delta}_t < a\}
	\quad \text{ and } \quad 
	\tau_a^+ = \inf\{t\ge 0 : V^{\delta}_t > a\},
	\end{equation*} 
	where $a \in \RR $. Let $q\geq 0$ and let $W^{(q)}\colon \mathbb R\to[0,\infty)$ be the \emph{$q$-scale function} of $X$, defined by $W^{(q)}(z)=0$ for $z<0$ and characterised on $[0,\infty)$ as the continuous function whose Laplace transform  is given by
	\begin{equation*}
	\int_0^\infty e^{-\lambda y} W^{(q)}(z)\, \mathrm d z = \left(  \log \left( \mathbb E \left[ e^{\lambda X_1} \right] \right) -q  \right)^{-1}, \qquad \text{for $\lambda>0$ sufficiently large}.
	\end{equation*}
	%%%%%%%%%%%%5
	Then, for $0\leq x<a< y^\delta_{x}(\infty)$, we have
	\begin{align}
	\label{formula of two sided exit problem for delta V}
	\mathbb{E}_{x} \left[ e^{-q \tau^{+}_{a} } \mathbf{1}_{\left\lbrace \tau^{+}_{a} < \tau^{-}_{0} \right\rbrace }\right]  =
	\exp \left\lbrace  - \int^{a}_{x}\dfrac{W^{(q)\prime}(y)}{W^{(q)}(y)(1- \delta(y))} \, \dd y \right\rbrace,
	\end{align}
	where $W^{(q)\prime}$ denotes a density of $W^{(q)}$ on $(0,\infty)$.
	On the other hand, if $a\geq y^\delta_{x}(\infty)$, then
	$\mathbb{E}_{x} \left[ e^{-q \tau^{+}_{a} } \mathbf{1}_{\left\lbrace \tau^{+}_{a} < \tau^{-}_{0} \right\rbrace }\right]=0.$
	
\end{corollary}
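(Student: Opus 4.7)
The plan is to translate $V^\delta$ into an equivalent latent tax process via Theorem \ref{equivalence relation between U and V}, invoke the two-sided exit identity for latent tax processes proved in \cite{GeneralTaxStructure}, and simplify by a change of variables. By part~(ii) of Theorem \ref{equivalence relation between U and V}, $V^\delta$ coincides pathwise with $U^{\gamma^\delta_x}$, where $\gamma^\delta_x(s) = \delta(y^\delta_x(s-x))$, and in addition $\overline V^\delta_t = y^\delta_x(\overline X_t - x)$. For $a \geq y^\delta_x(\infty)$, the latter identity together with the monotonicity of $y^\delta_x$ immediately gives $V^\delta_t \leq y^\delta_x(\infty) \leq a$ for every $t \geq 0$, hence $\tau^+_a = \infty$ $\PP_x$-almost surely and the expectation vanishes; this settles the final assertion of the corollary.

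For the main case $0 \leq x < a < y^\delta_x(\infty)$, the first passage times of $V^\delta$ coincide with those of the latent tax process $U^{\gamma^\delta_x}$, so I would apply the Kyprianou--Zhou two-sided exit formula directly to $U^{\gamma^\delta_x}$. A short computation from the definition of $\gamma^\delta_x$ and the ODE \eqref{number of the ODE} yields $\bar\gamma_x(s) = y^\delta_x(s-x)$, so $\bar\gamma_x^{-1}(a) = x + (y^\delta_x)^{-1}(a)$ is finite precisely because $a < y^\delta_x(\infty)$; this is what makes the Kyprianou--Zhou formula applicable in the first place. The change of variables $u = y^\delta_x(z - x) = \bar\gamma_x(z)$ then maps the $X$-level integration interval $[x, \bar\gamma_x^{-1}(a)]$ onto the $V^\delta$-level interval $[x,a]$, with Jacobian $1 - \delta(u)$ supplied by the ODE, and this transforms the Kyprianou--Zhou integrand into $W^{(q)\prime}(u)/\bigl(W^{(q)}(u)(1-\delta(u))\bigr)$, yielding \eqref{formula of two sided exit problem for delta V}.

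The only nontrivial point is keeping track of this change of variables, which converts the $X$-level parametrisation natural to \cite{GeneralTaxStructure} into the $V^\delta$-level parametrisation used in the corollary. Once the two identities $\bar\gamma_x(s) = y^\delta_x(s-x)$ and $(y^\delta_x)'(t) = 1-\delta(y^\delta_x(t))$ are in hand, the substitution is essentially mechanical.
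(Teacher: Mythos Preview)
Your proof is correct and follows essentially the same route as the paper: identify $V^\delta$ with the latent tax process $U^{\gamma_x^\delta}$ via Theorem~\ref{equivalence relation between U and V}\ref{i:intro:II}, invoke the Kyprianou--Zhou two-sided exit identity from \cite{GeneralTaxStructure}, and simplify using the relation between $\gamma_x^\delta$ and $\delta$. The only cosmetic difference is that the paper quotes the Kyprianou--Zhou formula already in the $U^\gamma$-level parametrisation and then substitutes the identity $\gamma_x^\delta\bigl((\bar\gamma_x^\delta)^{-1}(y)\bigr)=\delta(y)$ established in the proof of Theorem~\ref{equivalence relation between U and V}\ref{i:intro:II}, whereas you carry out the equivalent change of variables $u=\bar\gamma_x(z)=y_x^\delta(z-x)$ explicitly; these are the same computation in a different order.
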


The rest of this article is organised as follows. In Section \ref{s:remarks}, we explain the consequences of our results and make further connections with the literature. %the comparison with \cite{OptimalLossCarryForwardWang} and \cite{TheTaxIdentity}.
Section \ref{s:prelim and proofs} is devoted to the proofs of Theorem \ref{equivalence relation between U and V} and Corollary \ref{t:identities}. 

\section{Related work and further applications}
\label{s:remarks}

\paragraph{Markov property}
Assume $X$ is a Markov process.  As we have already commented in the previous section, it follows from the  integral
  equation \eqref{V} for the natural tax process $V^{\delta}$ that the process $(V^\delta,\overline{V}^\delta)$ is  Markov.  One might expect that the equivalence between the two types of tax processes should imply the same for $(U^\gamma,\overline{U}^\gamma)$ where $U^\gamma$ is an arbitrary latent tax process, since we know by Theorem \ref{equivalence relation between U and V}\ref{part (I)}  that $U^\gamma$ is also a natural tax process.    However, the corresponding natural tax rate is $\delta^\gamma_x = \delta^\gamma_{X_0}$, which 
  depends  upon the initial value of $X$.
  Looked at another way, although one can recover
  $\overline{X}$ from the formula
  $\overline{X}_{t}= \bar{\gamma}_{x}^{-1}(\overline{U}_{t}^\gamma)$,
  this too depends on knowledge of the initial value $X_0$.
  For this reason, we do not obtain the  Markov property for $(U^\gamma,\overline{U}^\gamma)$ in general.

\paragraph{An alternative definition of the natural tax process}
It would also appear to be reasonable to define a natural type of tax process
as a solution to the SDE % $W^\kappa=(W^\kappa_t)_{t\ge 0}$ with
\begin{equation}\label{e:W}
  W_t = X_t - \int_{0^+}^t \kappa(\overline{W}_s) \,\dd \overline{W}_s,
\end{equation}
where $\kappa \colon [x,\infty) \to [0,\infty)$.
Define 
$\delta = \frac{\kappa}{1+\kappa}$.
The process $V^\delta$, when it exists, is a solution to \eqref{e:W},
as can be shown using
Lemma \ref{equality in time for U and X}.
The
natural tax rate $\delta$ describes the tax rate as a proportion of the increments
of capital prior to taxation rather than after taxation, and therefore appears
to us to be preferable to $\kappa$ as a parameter.

\paragraph{Existence of the tax process with progressive natural tax rates}
When the tax rate increases with the amount of capital one has, the taxation regime is typically called progressive. We will show that, when $\delta$ is an increasing (in the weak sense) measurable function $\delta \colon [x,\infty) \rightarrow [0,1)$, then the ODE \eqref{number of the ODE} has a unique solution, which implies the existence and uniqueness of the natural tax process with tax rate $\delta$.

For existence, since $\delta$ is an increasing function, we have that
\[g(z) \coloneqq \dfrac{1}{1- \delta(z)}, \qquad z\ge x, \]
is a strictly positive, increasing measurable function, and hence integrable, so
\[G(y) \coloneqq \int^{y}_{x} g(z)\, \dd z, \qquad y \ge x, \]
is absolutely continuous.  Moreover, since $G$ is continuous and strictly increasing, $G^{-1}$ exists and, as $G^{\prime}>0$ a.e., $G^{-1}$ is absolutely continuous \cite[Vol.~I, p.~389]{MeasureTheory}. Thus, $(G^{-1})^{\prime}(t)$ exists for almost every $t$, and it follows that a solution to \eqref{number of the ODE} is given by $y_{x}(t) = G^{-1}(t)$. This is because, by the inverse function theorem \cite[Theorem 31.1]{MultivariableAnalysis}, it holds that
\[\dfrac{\dd G^{-1}(t)}{\dd t} = \frac{1}{g(G^{-1}(t))} = 1- \delta(G^{-1}(t)), \qquad \text{for a.e. } t > 0,\]
and since $G(x) = 0$, we have $G^{-1}(0) = x$.
 
For uniqueness, since $\delta$ is increasing, the right hand side of \eqref{number of the ODE} is decreasing. % and hence satisfies the one sided Lipschitz condition. 
 This guarantees uniqueness, as can be proved using, for instance, \cite[Theorem 1.3.8]{GeneralLinearMethodsForODEs}.  
% \end{remark}

\paragraph{Optimal control}
In the case where $X$ is a spectrally negative L\'evy process, \citet{OptimalLossCarryForwardWang} studied a very interesting optimal control problem for the latent tax process $U^\gamma$, given by
\begin{equation}\label{e:WHoc}
  \sup_{\gamma \in \Pi}
  \EE_x\biggl[ \int_0^{\sigma_0^-} e^{-qt} \gamma(\overline{X}_t) \, \dd\overline{X}_t
  \biggr],
\end{equation}
where $\sigma_0^- = \inf\{t\ge 0: U^{\gamma}_t < 0 \}$, and $\Pi$ is
the set of measurable functions $\gamma \colon [0,\infty) \to [\alpha,\beta]$,
where $0\le \alpha \leq \beta < 1$ are fixed.
Denote by $\gamma^*$ the function $\gamma\in\Pi$ which maximises
\eqref{e:WHoc}, if it exists.
A remarkable feature of Wang and Hu's work is that they obtain a
\emph{natural} tax process as the optimal solution to the problem
of controlling a latent tax process, as we will now explain.

In their Theorem 3.1, Wang and Hu state that  $\gamma^*$ should satisfy
the equation
\begin{equation*}
  \gamma^{*}(\overline{X}_{t})
  = \eta \left( x + \int_{x}^{\overline{X}_{t}} (1 - \gamma^{*}(y) ) \, \dd y \right)
  = \eta (\overline{U}_{t}^{\gamma^*}),
\end{equation*}
for some function $\eta$ which they call the \emph{optimal decision rule}.
On the other hand, let $\delta$ be a function satisfying the assumptions of \autoref{equivalence relation between U and V}\ref{i:intro:II}, and define $\gamma^\delta_x$ as in that result. If we write $\xi = \gamma^{\delta}_x$,  then, by the definition of $\gamma^{\delta}_x$ together with \eqref{Ubar as a function of Xbar} and \eqref{Vbar=Y(Xbar-x)}, we have the relation
\begin{equation*}
  \xi(\overline{X}_t) = \delta \left( x + \int_{x}^{\overline{X}_{t}} (1 - \xi(y) ) \, \dd y \right)  =  \delta(\overline{U}_{t}^\xi).
\end{equation*}

It follows from \autoref{equivalence relation between U and V} that the relationship between Wang and Hu's optimal decision rule $\eta$ and optimal tax rate $\gamma^*$  is nothing other than the relationship between a particular natural tax rate $\delta$ and the equivalent latent tax rate $\gamma_x^\delta$. Our results clarify that this connection is a sensible one
even outside of the optimal control context, and make clear under exactly which conditions this connection is valid.

Wang and Hu go on to show that $\eta$ must be piecewise constant,
and in particular
$\eta = f^b$, as defined in \eqref{e:fb}, where $b$ is specified in terms of
scale functions of the L\'evy process but is independent of $x$;
see section 4 and equation (5.15) in their work (in which $b$ is denoted $u_0$).
Combining this with our result, 
we see that Wang and Hu's solution of the optimal control
problem \eqref{e:WHoc} is actually a tax process with the piecewise constant natural tax rate
$f^b$, or equivalently the piecewise constant latent tax rate $f^{\tilde b(x)}$, where $\tilde b(x)$ depends on $x$ as in \autoref{example}.

\paragraph{Tax identity}

Assume we are in the setting of Corollary \ref{t:identities} where in particular $X$ is a spectrally negative L\'evy process. We are interested here in the tax identity: a relationship between the survival probability of the natural tax process $V^\delta$ and the one of the risk process with out tax $X$. To this end, let
\begin{equation*}
\phi_{\delta}(x) = \mathbb{P}_{x} \left(  \inf_{t \geq 0} V^{\delta}_{t} \geq 0 \right) \quad
\text{and} \quad 
\phi_{0}(x) = \mathbb{P}_{x} \left(  \inf_{t \geq 0} X_{t} \geq 0 \right)
\end{equation*}
be the survival probability in the risk model with and without taxation, respectively.

If $y^\delta_{x}(\infty) < \infty$,
  the process $V^\delta$ cannot exceed the level $y^\delta_{x}(\infty)$.
  Since from every starting level (and thus in particular from $y^\delta_{x}(\infty)$), there is a strictly positive probability  of $V$ going below zero, a standard renewal argument shows that  the survival probability $\phi_\delta(x)$ is zero in this case.
  
  On the other hand, if $y_{x}(\infty) = \infty$,  then we can apply Corollary \ref{t:identities} to get a relation between the two survival probabilities. Namely,
  by letting $q\rightarrow 0$ and $a\rightarrow\infty$ in \eqref{formula of two sided exit problem for delta V} and using the well-known expression for $\phi_{0}(x)$ (see, e.g., \cite[equation (8.10)]{FluctuationBook}), we have that
  \begin{align*}
  %\label{survival probability of V}
  \phi_{\delta}(x)
  =  \exp \left\lbrace  - \int^{\infty}_{x}\dfrac{W^{\prime}(y)}{W(y)(1- \delta(y))} \dd y \right\rbrace 
  = \exp \left\lbrace  - \int^{\infty}_{x} \dfrac{\dd \ln (\phi_{0}(y))}{\dd y} \cdot \dfrac{1}{(1- \delta(y))} \dd y \right\rbrace.
  \end{align*}
  This agrees with \cite[Proposition 3.1]{TheTaxIdentity} for the special case where $X$ is a Cram\'er-Lundberg risk process, which confirms that in \cite{TheTaxIdentity} natural tax processes are considered.

\section{Proofs}
\label{s:prelim and proofs} 

We start with a lemma generalising a result from \cite{GeneralTaxStructure}.
\begin{lemma}\label{equality in time for U and X}
Let $K = (K_t)_{t\ge 0}$ be a stochastic process for which every path is measurable
as a function of time and such that $K_t<1$ for every $t\ge 0$. Define
 \begin{equation*}%\label{process U}
 H_{t} = X_{t} - \int^{t}_{0^+} K_s  \dd \overline{X}_{s}, \qquad t\geq 0.
 \end{equation*}
  Then,
\begin{equation*}
\overline{H}_t = \overline{X}_{t} - \int^{t}_{0^+} K_s  \dd \overline{X}_{s},
\end{equation*} 
where  $\overline{H}_{t} = \sup_{s \leq t} H_{s}$. Moreover,
%\begin{equation*}
  $\lbrace t\geq 0:  {H}_{t} = \overline{H}_{t} \rbrace = \lbrace t\geq 0:  X_{t} = \overline{X}_{t} \rbrace.$
% \end{equation*}
\end{lemma}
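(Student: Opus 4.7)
My plan is to prove the identity $\overline{H}_t = M_t$, where $M_t := \overline{X}_t - A_t$ and $A_t := \int_{0^+}^t K_s \dd \overline{X}_s$; once this is established, the second assertion follows at once by subtracting $A_t$ from both sides of $H_t = \overline{H}_t$. Recall that $\overline{X}$ is continuous because $X$ is c\`adl\`ag without upward jumps, so the Lebesgue--Stieltjes integral $A_t$ is unambiguous.

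First I would verify that $M$ is non-decreasing. For $s \le t$ one has $M_t - M_s = \int_{(s,t]} (1 - K_u) \dd \overline{X}_u$, which is non-negative because $1 - K_u > 0$ and $\dd \overline{X}$ is a positive measure; this is where the assumption $K_u < 1$ enters essentially, but note that no sign restriction on $K$ is required. Together with the pointwise bound $H_s = X_s - A_s \le \overline{X}_s - A_s = M_s$, monotonicity of $M$ immediately yields the easy direction
\begin{equation*}
\overline{H}_t = \sup_{s \le t} H_s \le \sup_{s \le t} M_s = M_t.
\end{equation*}

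The main obstacle is the reverse inequality $\overline{H}_t \ge M_t$, because $X$ need not attain its running supremum --- it may reach a new maximum and then jump downwards, so there need be no $s \le t$ at which $H_s = M_t$ exactly. My plan is therefore to approximate $M_t$ from below by values of $H$. Set $v := \inf\{ s \ge 0 : \overline{X}_s = \overline{X}_t \}$, which lies in $[0,t]$ and satisfies $\overline{X}_v = \overline{X}_t$ by continuity of $\overline{X}$. The case $v = 0$ is trivial: $\overline{X}$ is then constant on $[0,t]$, so $A_t = 0$ and $M_t = \overline{X}_0 = X_0 = H_0 \le \overline{H}_t$. Otherwise, $\overline{X}_{v - 1/n} \uparrow \overline{X}_t$ strictly from below (the definition of $v$ forces $\overline{X}_{s} < \overline{X}_t$ for all $s < v$), and I would select $u_n \le v - 1/n$ with $X_{u_n} \ge \overline{X}_{v - 1/n} - 1/n$; this guarantees both $\overline{X}_{u_n} - X_{u_n} \to 0$ and $\overline{X}_{u_n} \to \overline{X}_t$. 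Decomposing
\begin{equation*}
M_t - H_{u_n} = \int_{(u_n,\, t]} (1 - K_u) \dd \overline{X}_u + (\overline{X}_{u_n} - X_{u_n}),
\end{equation*}
both summands vanish in the limit: the second by construction, and the first because $\dd \overline{X}((v,t]) = 0$ reduces the integral to $\int_{(u_n, v]}(1-K_u)\dd\overline{X}_u$, the integrand $1-K$ is $\dd \overline{X}$-integrable on $[0,t]$ (as $M_t$ is finite), and $\dd \overline{X}(\{v\}) = 0$ by continuity of $\overline{X}$, so dominated convergence applies. Hence $H_{u_n} \to M_t$, forcing $\overline{H}_t \ge M_t$, which together with the easy direction gives $\overline{H}_t = \overline{X}_t - A_t$, and subtracting $A_t$ from $H_t = \overline{H}_t$ yields the final equality of times.
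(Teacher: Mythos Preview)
Your proof is essentially correct and gives a self-contained argument, whereas the paper simply defers to \cite[Lemma~2.1]{GeneralTaxStructure} and states that the proof there goes through unchanged under the hypothesis $K_t<1$. Your decomposition into the monotone process $M_t=\overline{X}_t-A_t$ for the upper bound, together with the approximation of $M_t$ by values $H_{u_n}$ for the lower bound, is exactly the right structure.

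One point to tighten: your dominated-convergence step tacitly assumes $u_n\to v$, so that $\mathbf{1}_{(u_n,v]}\to\mathbf{1}_{\{v\}}$ pointwise. You have not stated this, but it does follow from what you have already shown: any subsequential limit $u^*\in[0,v]$ of $(u_n)$ satisfies $\overline{X}_{u^*}=\lim\overline{X}_{u_n}=\overline{X}_v=\overline{X}_t$ by continuity of $\overline{X}$, and the definition of $v$ then forces $u^*=v$. Alternatively, you can bypass this altogether by noting that $\dd\overline{X}((u_n,t])=\overline{X}_t-\overline{X}_{u_n}\to 0$ and invoking the $\epsilon$--$\delta$ absolute continuity of the integral of the nonnegative $\dd\overline{X}$-integrable function $1-K$; this yields $\int_{(u_n,t]}(1-K_u)\,\dd\overline{X}_u\to 0$ directly, with no need to locate $u_n$.
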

\begin{proof}
  Since $K_t< 1$ for all $t\geq 0$, the proof in \cite[Lemma 2.1]{GeneralTaxStructure}
  works without alteration.
\end{proof}
 
Next we prove part \ref{part II}  of Theorem \ref{equivalence relation between U and V} except for the existence of the integral equation.
\begin{lemma}\label{l:char SDE}
	Let $\delta \colon [x,\infty)\rightarrow [0,1)$ be a measurable function and assume that there exists a unique solution $y^\delta_{x}$ of \eqref{number of the ODE}. Define $\gamma_{x}^{\delta}\colon [x, \infty)\to [0,1)$
  by $\gamma_{x}^{\delta}(s) =  \delta \left( y^\delta_{x}(s-x) \right)$.  
	If there exists a solution $V^\delta=(V_t^\delta)_{t\geq 0}$ to the integral equation
	\begin{equation}
	  \label{Vt SDE 2}
	  V^{\delta}_{t} 
	  = X_{t} - \int^{t}_{0^+} \delta(\overline{V}^{\delta}_{r}) \dd \overline{X}_{r}, \qquad t\geq 0,
	 % \qquad V^{\delta}_0 = x,
	\end{equation}
	then
	  $\overline{V}^{\delta}_{t}= y^\delta_{x}(\overline{X}_{t}-x)$ 
	and hence $V^\delta$  is a latent tax process with latent tax rate given by $\gamma_{x}^{\delta}$.
\end{lemma}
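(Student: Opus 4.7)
The approach is to apply \autoref{equality in time for U and X} with $K_r = \delta(\overline{V}^\delta_r) \in [0,1)$, which immediately yields
\[
  \overline{V}^\delta_t = \overline{X}_t - \int_{0^+}^t \delta(\overline{V}^\delta_r) \, \dd \overline{X}_r, \qquad t \geq 0.
\]
The task then reduces to identifying the right-hand side with $y^\delta_x(\overline{X}_t - x)$ and substituting back into \eqref{Vt SDE 2}.

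The central step is the observation that $\overline{V}^\delta_t$ depends on $t$ only through $\overline{X}_t$. To see this, if $\overline{X}_a = \overline{X}_b$ for some $0 \leq a \leq b$, then by monotonicity $\overline{X}$ is constant on $[a,b]$, so the Lebesgue--Stieltjes integral over $[a,b]$ vanishes and the displayed equation forces $\overline{V}^\delta_a = \overline{V}^\delta_b$. Consequently, there is a well-defined function $h$ on the range of $\overline{X} - x$ satisfying $\overline{V}^\delta_t = h(\overline{X}_t - x)$. A change of variables in the Lebesgue--Stieltjes integral, exploiting the continuity and monotonicity of $\overline{X}$, then transforms the equation for $\overline{V}^\delta$ into
\[
  h(u) = x + u - \int_0^u \delta(h(v)) \, \dd v,
\]
valid for every $u$ in the range of $\overline{X} - x$. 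From this representation $h$ is absolutely continuous with $h(0) = x$ and $h'(u) = 1 - \delta(h(u))$ for almost every $u$, so by the assumed uniqueness of solutions to \eqref{number of the ODE}, $h$ agrees with $y^\delta_x$ on its domain, establishing \eqref{Vbar=Y(Xbar-x)}.

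Substituting $\overline{V}^\delta_r = y^\delta_x(\overline{X}_r - x)$ into \eqref{Vt SDE 2} and using the definition $\gamma^\delta_x(s) = \delta(y^\delta_x(s - x))$ then gives
\[
  V^\delta_t = X_t - \int_{0^+}^t \gamma^\delta_x(\overline{X}_r) \, \dd \overline{X}_r,
\]
which is precisely the defining equation \eqref{U} of the latent tax process with rate $\gamma^\delta_x$. I anticipate the most delicate point to be the rigorous justification of the change of variables and the well-definedness of $h$ when $\overline{X}$ has flat pieces; both of these should, however, follow from the standard theory of Lebesgue--Stieltjes integration against a continuous nondecreasing integrator together with the constancy argument above.
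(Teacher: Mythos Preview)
Your proposal is correct and follows essentially the same route as the paper's proof. The paper phrases the reduction to the ODE via the right-inverse $L^{-1}_a = \inf\{t>0:\overline X_t > a+x\}$ of $\overline X - x$ (so that your $h$ is realised concretely as $h(a) = \overline V^\delta_{L^{-1}_a}$) together with the standard Stieltjes change-of-variables formula, whereas you arrive at the same function $h$ through the constancy argument on flat stretches of $\overline X$; the remaining steps---deriving the integral form of \eqref{number of the ODE}, invoking uniqueness, and substituting back into \eqref{Vt SDE 2}---are identical.
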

\begin{proof}
  Suppose that $V^{\delta}$ solves \eqref{Vt SDE 2}.
  By Lemma \ref{equality in time for U and X},
\begin{equation}\label{runningmax_V}
    \overline{V}^{\delta}_{t} 
    = \overline{X}_{t} - \int^{t}_{0^+} \delta(\overline{V}^{\delta}_{r})
    \, \dd \overline{X}_{r}, \qquad t\geq 0.
\end{equation}
 We define $L_t = \overline{X}_t - x$ and we let $L_a^{-1}$ be its right-inverse, i.e. 
 \begin{align*}
 L_{a}^{-1} \coloneqq
 \begin{cases} 
 \inf \lbrace t>0: L_{t}>a \rbrace = \inf\{t>0:\overline X_t > a+x\}, & \text{if } 0\leq a<L_{\infty}, \\
 \infty, & \text{if } a\geq L_{\infty}.
 \end{cases}
 \end{align*}
 As $X$ does not have upward jumps, $t\mapsto \overline X_t$ is continuous, which implies 
 \begin{equation}\label{maxX_at_inv}
 \overline{X}_{L^{-1}_{a}} = x+ (a \wedge L_\infty).
 \end{equation}
 Using respectively \eqref{runningmax_V} for $t=L^{-1}_a=L^{-1}_{a\wedge L_\infty}$, \eqref{maxX_at_inv} and  the change of variables formula with $r=L^{-1}_b$ (see, for instance, \cite[footer of p.~8]{revuzyor}), we have for $a\geq 0$,
 \begin{equation*}
 \begin{split}
\overline{V}^{\delta}_{L^{-1}_{a\wedge L_\infty}} & = \overline{X}_{L^{-1}_{a\wedge L_\infty}} - \int^{L^{-1}_{a\wedge L_\infty}}_{0^+} \delta(\overline{V}^{\delta}_{r}) \, \dd \overline{X}_{r} \\
& =  x + (a \wedge L_\infty)  - \int^{\infty}_{0^+} \mathbf 1_{ \left\{ r\leq L^{-1}_{a\wedge L_\infty} \right\} } \delta(\overline{V}^{\delta}_{r}) \, \dd \overline{X}_{r} \\
& =  x + (a \wedge L_\infty) - \int^{\infty}_{0} %\mathbf{1}_{\lbrace s < t \rbrace} 
\mathbf 1_{ \left\{ 0< L^{-1}_b \leq L^{-1}_{a\wedge L_\infty} \right\} } \delta(\overline{V}^{\delta}_{L^{-1}_{b}})\, \dd b \\
& =  x + \int^{a \wedge L_\infty}_{0} \left( 1 - \delta \left( \overline{V}^{\delta}_{L^{-1}_{b}} \right) \right) \, \dd b,
 \end{split}
 \end{equation*}
 where for the last equality we used that $L^{-1}_b$ is strictly increasing on $[0,L_\infty]$, which follows because $t\mapsto \overline X_t$ is continuous.
 By the hypothesis that \eqref{number of the ODE} has a unique solution $y_x^\delta$, we deduce,
 \begin{equation}\label{lemma_laststep}
  \overline{V}^{\delta}_{L^{-1}_{a\wedge L_\infty}} 
  = y^\delta_{x} \left( a \wedge L_\infty  \right) = y^\delta_{x} \left( \overline{X}_{L^{-1}_{a\wedge L_\infty}}-x \right), \qquad a\geq 0,
 \end{equation}
 where the last equality follows by  \eqref{maxX_at_inv}.  As $t\mapsto\overline X_t$ does not jump upwards, $\overline{X}_{L^{-1}_{L_t}} = \overline X_t$ for all $t\geq 0$, which implies via \eqref{runningmax_V} that $\overline V^\delta_{L^{-1}_{L_t}}=\overline V^\delta_t$ for all $t\geq 0$. So by invoking \eqref{lemma_laststep} for $a=L_t$, we conclude that  $\overline{V}^{\delta}_{t}= y^\delta_{x}(\overline{X}_{t}-x)$  for all $t\geq 0$.
\end{proof}
 
 We are now ready to prove Theorem \ref{equivalence relation between U and V} and Corollary \ref{t:identities}.
\begin{proofof}{Theorem \ref{equivalence relation between U and V}}
	\begin{enumerate}[(i)]
		\item \label{part (I)} Fix $t\ge 0$.
		By \autoref{equality in time for U and X}, we have
	\begin{equation*}
		  \overline{U}_{t}^{\gamma} = \overline{X}_{t} - \int^{t}_{0^+} \gamma(\overline{X}_{r}) \dd \overline{X}_{r}.
	\end{equation*}
	By applying the change of variable $y=\overline{X}_{r}$, we obtain
    \begin{equation*}
      \overline{U}_{t}^{\gamma} 
      = \overline{X}_{t} -\int^{\overline{X}_{t}}_{x} \gamma(y) \dd y 
      = \bar{\gamma}_{x}(\overline{X}_{t}),
    \end{equation*}
    where we recall that $\bar{\gamma}_x(s) = x + \int_x^s(1-\gamma(y))\,\dd y$.
    Hence, $\bar{\gamma}_{x}^{-1}(\overline{U}^{\gamma}_{t}) = \overline{X}_{t}$,
    and so
    $\gamma(\overline{X}_{t}) = \gamma(\bar{\gamma}_{x}^{-1}(\overline{U}^{\gamma}_{t})) 
    =  \delta_{x}^{\gamma}(\overline{U}^{\gamma}_{t})$.
    It follows that $U^{\gamma}$ is a natural tax process with natural tax rate $\delta^\gamma_x$.

  \item \label{part II}
  The uniqueness of a solution to \eqref{V}, and the equality \eqref{Vbar=Y(Xbar-x)}, follow directly from Lemma \ref{l:char SDE}. So it remains to prove the existence of a solution to \eqref{V}. 
  By the hypothesis there exists a unique solution $y^\delta_x$ to \eqref{number of the ODE}.
  With $\gamma_{x}^{\delta}(z)= \delta \left( y^\delta_{x}(z-x) \right)$, we  define $\bar{\delta}\colon [x,\infty)\rightarrow[0,1)$ by
  \begin{equation*}  
  \bar{\delta}(z) = \gamma_{x}^{\delta} \bigl( (\bar{\gamma}^{\delta}_{x})^{-1} (z)\bigr) = \delta \left( y^\delta_{x} \left( \left( \bar{\gamma}^\delta_{x} \right)^{-1}(z) - x \right) \right),
  \end{equation*}
  where $(\bar{\gamma}^{\delta}_{x})^{-1}$ is the inverse function of
  \begin{equation}\label{def_bargamdel}
  \bar{\gamma}^\delta_{x}(z) = x + \int_{x}^{z}(1-\gamma_{x}^{\delta}(y)) \dd y.
    %=: \bar{\gamma}^{\delta}_{x}(s)
  \end{equation}
  By part \ref{part (I)} of Theorem \ref{equivalence relation between U and V}, the tax process with latent tax rate $\gamma_{x}^{\delta}$ is a natural tax process with natural tax rate $\bar\delta$.  
  Thus, it remains to show that $\bar{\delta}(z) = \delta(z)$ for $z \geq x$.
  
  Note that $\bar{\gamma}^\delta_{x}$ is an absolutely continuous function and hence $(\bar{\gamma}_{x}^\delta)^{\prime}$ exists almost everywhere. By \eqref{number of the ODE} we have that, for $z$ such that $(\bar{\gamma}_{x}^\delta)^{\prime}(z)$ exists,
\begin{equation*}
\begin{split}
\frac{\dd}{\dd z} \left( y_{x}^\delta( (\bar{\gamma}^\delta_{x})^{-1}(z) - x) \right) 
&=  \left[ 1 - \delta \left( y^\delta_{x} \left( (\bar{\gamma}_{x}^\delta)^{-1}(z) - x \right) \right) \right] \dfrac{\dd}{\dd z} \left( (\bar{\gamma}_{x}^\delta)^{-1}(z) \right) \\
&=  \left[ 1-\gamma_{x}^{\delta} \left( (\bar{\gamma}^\delta_{x})^{-1}(z) \right) \right] \dfrac{\dd}{\dd z} \left( (\bar{\gamma}_{x}^\delta)^{-1}(z) \right). 
\end{split}
\end{equation*}
  Since by the inverse function theorem \cite[Theorem 31.1]{MultivariableAnalysis},
  \begin{align*}
    \dfrac{\dd }{\dd z} \left( (\bar{\gamma}^\delta_{x})^{-1}(z) \right)
    &= \frac{1}{(\bar{\gamma}^\delta_{x})^{\prime} \left( (\bar{\gamma}^\delta_{x})^{-1}(z) \right)} 
    = \dfrac{1}{ 1-\gamma_{x}^{\delta} \left( (\bar{\gamma}^\delta_{x})^{-1}(z) \right)  },
  \end{align*}
   we see that
\begin{equation*}
\frac{\dd}{\dd z} \left( y_{x}^\delta( (\bar{\gamma}^\delta_{x})^{-1}(z) - x) \right)  = 1 \qquad \text{a.e.},
\end{equation*}
  and therefore, by the absolute continuity, for some constant $c$, we have that
\begin{equation*}
y_{x}^\delta( (\bar{\gamma}^\delta_{x})^{-1}(z) - x)= z + c, \qquad z\geq x.
\end{equation*}
  Since $(\bar{\gamma}_{x}^\delta(x))^{-1}= x =y^\delta_{x}(0)$, we get that $c=0$. We conclude that
  $\bar\delta(z)=\delta(z)$ for $z\geq x$, and this completes the proof.
  \end{enumerate}
\end{proofof}

\begin{proofof}{\autoref{t:identities}}
 
    From \eqref{Vbar=Y(Xbar-x)} we see that $\tau_a^+=\infty$ when $a\geq y_{x}(\infty)$. Hence we can assume without loss of generality that  $a< y^\delta_{x}(\infty)$. By part \ref{part II} of Theorem \ref{equivalence relation between U and V} we know that $V^\delta$ is a latent tax process with latent tax rate $\gamma_{x}^{\delta}$. Hence we can use Theorem 1.1 in \cite{GeneralTaxStructure} to conclude that,
    \begin{equation*}
    \mathbb{E}_{x} \left[ e^{-q \tau^{+}_{a} } \mathbf{1}_{\left\lbrace \tau^{+}_{a} < \tau^{-}_{0} \right\rbrace }\right]  =
    \exp \left\lbrace  - \int^{a}_{x}\dfrac{W^{(q)\prime}(y)}{W^{(q)}(y) \left( 1-\gamma_{x}^{\delta} \left( (\bar{\gamma}_{x}^\delta)^{-1}(y) \right) \right) } \dd y \right\rbrace,
    \end{equation*}
    where $(\bar{\gamma}_{x}^\delta)^{-1}$ is the inverse of the function $\bar{\gamma}_{x}^\delta$ given by \eqref{def_bargamdel}. Note that in \cite{GeneralTaxStructure} the additional assumption $\int_0^\infty (1-\gamma_{x}^{\delta}(z))\mathrm d z=\infty$ is made on the latent tax rate, but from the proof of Theorem 1.1 in \cite{GeneralTaxStructure} it is clear that this assumption is unnecessary when $a< y^\delta_{x}(\infty)$.
    In the proof of Theorem \ref{equivalence relation between U and V}\ref{part II} we showed that $\gamma_{x}^{\delta} \left( (\bar{\gamma}_{x}^\delta)^{-1}(y) \right) =\delta(y)$ for all $y\geq x$, which finishes the proof. 
\end{proofof}

\paragraph{Acknowledgements} 
We thank two anonymous referees for their helpful comments. Also, this work has been funded by grant for PhD student, Dalal Al Ghanim, from King Saud University, Saudi Arabia.

\bibliographystyle{abbrvnat}
\bibliography{references}

\begin{thebibliography}{16}
\providecommand{\natexlab}[1]{#1}
\providecommand{\url}[1]{\texttt{#1}}
\expandafter\ifx\csname urlstyle\endcsname\relax
  \providecommand{\doi}[1]{doi: #1}\else
  \providecommand{\doi}{doi: \begingroup \urlstyle{rm}\Url}\fi

\bibitem[Albrecher and Hipp(2007)]{LundbergRiskProcessWithTax}
H.~Albrecher and C.~Hipp.
\newblock Lundberg's risk process with tax.
\newblock \emph{Bl. DGVFM}, 28\penalty0 (1):\penalty0 13--28, 2007.
\newblock ISSN 1864-0281.
\newblock \doi{10.1007/s11857-007-0004-4}.

\bibitem[Albrecher et~al.(2008)Albrecher, Renaud, and
  Zhou]{LevyInsuranceRiskProcessWithTax}
H.~Albrecher, J.-F. Renaud, and X.~Zhou.
\newblock A {L}\'evy insurance risk process with tax.
\newblock \emph{J. Appl. Probab.}, 45\penalty0 (2):\penalty0 363--375, 2008.
\newblock ISSN 0021-9002.
\newblock \doi{10.1239/jap/1214950353}.

\bibitem[Albrecher et~al.(2009)Albrecher, Borst, Boxma, and
  Resing]{TheTaxIdentity}
H.~Albrecher, S.~Borst, O.~Boxma, and J.~Resing.
\newblock The tax identity in risk theory---a simple proof and an extension.
\newblock \emph{Insurance Math. Econom.}, 44\penalty0 (2):\penalty0 304--306,
  2009.
\newblock ISSN 0167-6687.
\newblock \doi{10.1016/j.insmatheco.2008.05.001}.

\bibitem[Albrecher et~al.(2014)Albrecher, Avram, Constantinescu, and
  Ivanovs]{MAPtax2014}
H.~Albrecher, F.~Avram, C.~Constantinescu, and J.~Ivanovs.
\newblock The tax identity for {M}arkov additive risk processes.
\newblock \emph{Methodol. Comput. Appl. Probab.}, 16\penalty0 (1):\penalty0
  245--258, 2014.
\newblock ISSN 1387-5841.
\newblock \doi{10.1007/s11009-012-9310-y}.

\bibitem[Bogachev(2007)]{MeasureTheory}
V.~I. Bogachev.
\newblock \emph{Measure theory. {V}ol. {I}, {II}}.
\newblock Springer-Verlag, Berlin, 2007.
\newblock ISBN 978-3-540-34513-8; 3-540-34513-2.
\newblock \doi{10.1007/978-3-540-34514-5}.

\bibitem[Cheung and Landriault(2012)]{CheungCramer}
E.~C.~K. Cheung and D.~Landriault.
\newblock On a risk model with surplus-dependent premium and tax rates.
\newblock \emph{Methodol. Comput. Appl. Probab.}, 14\penalty0 (2):\penalty0
  233--251, 2012.
\newblock ISSN 1387-5841.
\newblock \doi{10.1007/s11009-010-9197-4}.

\bibitem[Jackiewicz(2009)]{GeneralLinearMethodsForODEs}
Z.~Jackiewicz.
\newblock \emph{General linear methods for ordinary differential equations}.
\newblock John Wiley \& Sons, Inc., Hoboken, NJ, 2009.
\newblock ISBN 978-0-470-40855-1.
\newblock \doi{10.1002/9780470522165}.

\bibitem[Kyprianou(2014)]{FluctuationBook}
A.~E. Kyprianou.
\newblock \emph{Fluctuations of {L}\'evy processes with applications}.
\newblock Universitext. Springer, Heidelberg, second edition, 2014.
\newblock ISBN 978-3-642-37631-3; 978-3-642-37632-0.
\newblock \doi{10.1007/978-3-642-37632-0}.
\newblock Introductory lectures.

\bibitem[Kyprianou and Ott(2012)]{KaprianouAndOtt}
A.~E. Kyprianou and C.~Ott.
\newblock Spectrally negative {L}\'evy processes perturbed by functionals of
  their running supremum.
\newblock \emph{J. Appl. Probab.}, 49\penalty0 (4):\penalty0 1005--1014, 2012.
\newblock ISSN 0021-9002.

\bibitem[Kyprianou and Zhou(2009)]{GeneralTaxStructure}
A.~E. Kyprianou and X.~Zhou.
\newblock General tax structures and the {L}\'evy insurance risk model.
\newblock \emph{J. Appl. Probab.}, 46\penalty0 (4):\penalty0 1146--1156, 2009.
\newblock ISSN 0021-9002.
\newblock \doi{10.1239/jap/1261670694}.

\bibitem[Li et~al.(2013)Li, Tang, and Zhou]{litangzhou}
B.~Li, Q.~Tang, and X.~Zhou.
\newblock A time-homogeneous diffusion model with tax.
\newblock \emph{J. Appl. Probab.}, 50\penalty0 (1):\penalty0 195--207, 2013.
\newblock ISSN 0021-9002.
\newblock \doi{10.1239/jap/1363784433}.

\bibitem[Price(1984)]{MultivariableAnalysis}
G.~B. Price.
\newblock \emph{Multivariable analysis}.
\newblock Springer-Verlag, New York, 1984.
\newblock ISBN 0-387-90934-6.
\newblock \doi{10.1007/978-1-4612-5228-3}.

\bibitem[Renaud(2009)]{TheDistributionOfTaxPayments}
J.-F. Renaud.
\newblock The distribution of tax payments in a {L}\'evy insurance risk model
  with a surplus-dependent taxation structure.
\newblock \emph{Insurance Math. Econom.}, 45\penalty0 (2):\penalty0 242--246,
  2009.
\newblock ISSN 0167-6687.
\newblock \doi{10.1016/j.insmatheco.2009.07.004}.

\bibitem[Revuz and Yor(1999)]{revuzyor}
D.~Revuz and M.~Yor.
\newblock \emph{Continuous martingales and {B}rownian motion}, volume 293 of
  \emph{Grundlehren der Mathematischen Wissenschaften [Fundamental Principles
  of Mathematical Sciences]}.
\newblock Springer-Verlag, Berlin, third edition, 1999.
\newblock ISBN 3-540-64325-7.
\newblock \doi{10.1007/978-3-662-06400-9}.

\bibitem[Wang and Hu(2012)]{OptimalLossCarryForwardWang}
W.~Wang and Y.~Hu.
\newblock Optimal loss-carry-forward taxation for the {L}\'evy risk model.
\newblock \emph{Insurance Math. Econom.}, 50\penalty0 (1):\penalty0 121--130,
  2012.
\newblock ISSN 0167-6687.
\newblock \doi{10.1016/j.insmatheco.2011.10.011}.

\bibitem[Wei(2009)]{wei}
L.~Wei.
\newblock Ruin probability in the presence of interest earnings and tax
  payments.
\newblock \emph{Insurance Math. Econom.}, 45\penalty0 (1):\penalty0 133--138,
  2009.
\newblock ISSN 0167-6687.
\newblock \doi{10.1016/j.insmatheco.2009.05.004}.

\end{thebibliography}

\end{document}